\theoremstyle{plain} 
\newtheorem{theorem}{Theorem}
\newtheorem{proposition}[theorem]{Proposition}
\newtheorem{lemma}{Lemma}
\newtheorem{corollary}[theorem]{Corollary}
\theoremstyle{definition}
\newtheorem{remark}[theorem]{Remark}
\newcommand*{\R}{\mathbb{R}}
\newcommand*{\N}{\mathbb{N}}
\newcommand{\norm}[1]{\ensuremath{\left\lVert #1 \right\rVert}}
\newcommand{\halmaz}[1]{\left\{\,#1\,\right\}}
\newcommand{\halmazvonal}[2]{\left\{\,#1\mid #2\,\right\}}
\newcommand*{\D}{\mathbb{D}}
\def\difS{\mathsf{Dif\!f}_+^{\infty }(\mathbb{S}^1)}
\def\F{\mathcal F}
\def\Hol{\mathsf{Hol}}
\def\ih#1{\mathfrak{hol}^*_{\, #1}(\mathbb D^2)}
\def\ihm#1{\mathfrak{hol}^*_{\, #1}(M)}
\def\I{\mathcal I}
\def\P{\mathcal P}
\def\X#1{\mathfrak X({#1})}
\newenvironment{packed_enumerate}{
  \begin{enumerate}[topsep=3pt, partopsep=0pt,leftmargin=25pt]
    \setlength{\itemsep}{0pt}
    \setlength{\parskip}{3pt}
    \setlength{\parsep}{3pt}
  }{\end{enumerate}}
\title[The holonomy of projective Randers two-manifolds of constant
curvature]{The holonomy group of projectively flat Randers
  \\
  two-manifolds of constant curvature}
\author[B.~Hubicska]{Bal\'azs Hubicska}
\email{\url{hubicska.balazs@science.unideb.hu}}
\author[Z.~Muzsnay]{Zolt\'an Muzsnay}
\email{\url{muzsnay@science.unideb.hu}}
\address{University of Debrecen, Institute of Mathematics, Pf.~400, Debrecen,
  4002, Hungary}
\thanks{The research was partially supported by the EFOP 3.6.2-16-2017-00015
  project and the EFOP-3.6.1-16-2016-00022 project. The project is co-financed
  by the European Union and the European Social Fund.}
\subjclass[2000]{53C29, 53B40,  22E65, 17B66}
\keywords{Finsler geometry, Randers metric, Zermelo navigation, curvature,
  holonomy, diffeomorphism groups, infinite-dimensional Lie group.}
\begin{document}

\begin{abstract}
  In this paper, we investigate the holonomy structure of the most accessible
  and demonstrative 2-dimensional Finsler surfaces, the Randers surfaces.
  Randers metrics can be considered as the solutions of the Zermelo navigation
  problem.  We give the classification of the holonomy groups of locally
  projectively flat Randers two-manifolds of constant curvature. In particular
  we prove that the holonomy group of a simply connected non-Riemannian
  projectively flat Finsler two-manifold of constant non-zero flag curvature is
  maximal and isomorphic to the orientation preserving diffeomorphism group of
  the circle.
\end{abstract}

\maketitle

\section{Introduction}

The holonomy group of a Riemann or Finsler manifold is a natural geometric
object: it is the transformation group generated by parallel translations
along loops with respect to the associated canonical (linear,
resp.~homogeneous) connection.  The Riemannian holonomy groups have been
extensively studied during the second half of the last century but, maybe
because of the computational difficulties, little attention was paid to the
Finslerian case. Although there are similarities, the holonomy properties of
Finsler manifolds can be very different from the Riemannian ones, as recent
results show.  Indeed, the fundamental result of Borel and Lichnerowicz
\cite{borel_lichn_1952} from 1952 claims that the holonomy group of a simply
connected Riemannian manifold is a (finite dimensional) closed Lie subgroup of
the special orthogonal group $SO(n)$.  In contrast with this, in
\cite{muzs_nagy_2011} it has been proven that the holonomy group of an at
least three-dimensional non-Riemannian Finsler manifold of nonzero constant
curvature is not a compact Lie group.  In \cite{MuzsNagy_2015} it has been
shown that there exist large families of projectively flat Finsler manifolds
of constant curvature such that their holonomy groups are not finite
dimensional Lie groups. In \cite{muzsnay_nagy_2014} projective Finsler
manifolds of constant curvature having infinite dimensional holonomy group
have been characterized.  The proofs in the above-mentioned papers
\cite{muzs_nagy_2011, muzsnay_nagy_2014, MuzsNagy_2015} give estimates for the
dimension of tangent Lie algebras of the holonomy group but unfortunately,
they do not give direct information on the structure of the holonomy groups.

In contrast to the Riemannian case where the complete classification is
already known, Finsler holonomy groups are described only for few, very
special classes of metrics: the holonomy of Landsberg manifolds have been
investigated in \cite{kozma_2000} and the holonomy of Berwald manifolds have
been characterized in \cite{szabo_1981}. First examples describing infinite
dimensional Finslerian holonomy groups can be found in
\cite{MuzsNagy_max_2015}. To achieve further significant progress, it would be
very important to investigate systematically the holonomy structure of
different classes of Finsler metrics.

In this article, we consider one of the most accessible and demonstrative
examples for non-Riemannian Finsler manifolds, the Randers manifolds
\cite{bao_chern_shen_2000, randers_1941}. In the Randers case, the Finsler
function is a Riemann norm deformed by a 1-form.  As it has been proven in
\cite{BaoRoblesShen_2004}, Randers metrics describe the Zermelo navigation
problem on Riemannian manifolds. This fact may suggest that the holonomy
structure of Randers manifolds are similar to that of Riemann manifolds but,
on the contrary: they can be very different, as the results of this paper
show.  We focus our attention on the holonomy properties of simply connected,
locally projectively flat Randers two-manifolds of constant flag curvature
$\lambda$.  This class of manifolds was already considered in
\cite{MuzsNagy_2015} where it has been proved that the holonomy group of such
a manifold is finite dimensional if $\lambda=0$ or the metric is Riemannian,
and \emph{infinite dimensional} if $\lambda\neq 0$ and the metric is
non-Riemannian. The finite dimensional holonomy structures are already well
known, but nothing was known about the infinite dimensional case: the results
reveal no information about the structure of the holonomy group.

The goal of this paper is to complete the results of \cite{MuzsNagy_2015} by
describing the infinite dimensional holonomy structure of projectively flat
Randers surfaces of non-zero constant curvature.  The main result (Theorem
\ref{thm:holon-proj-flat}.) shows that the holonomy group of a such manifold
is maximal and its closure is isomorphic to $Dif\!f_+(\mathbb S^1)$, the
orientation preserving diffeomorphism group of the circle $\mathbb{S}^1$.
This result is surprising because it shows that even in the case when the
geodesic structure is simple (the geodesics are straight lines), the holonomy
group can still be a very large group.  Finally, we obtain the classification
of the holonomy groups of projectively flat Randers surfaces (Corollary
\ref{cor:holonomy}).

\bigskip

\section{Preliminaries}

Throughout this article, $M$ is a $C^\infty$ smooth manifold,
${\mathfrak X}^{\infty}(M)$ denotes the Lie algebra of smooth vector fields on
$M$ and ${\mathsf{Diff}}^\infty(M)$ is the group of $C^\infty$-diffeomorphisms
of $M$.  The first and the second tangent bundles of $M$ are denoted by
$(TM,\pi ,M)$ and $(TTM,\tau ,TM)$, respectively.

\subsection{Finsler manifolds} \

\smallskip

\noindent
A \emph{Finsler manifold} is a pair $(M,\mathcal F)$, where the norm
function $\F\colon TM \to \mathbb{R}_+$ is continuous, smooth on $\hat T
M \!:= \!TM\!  \setminus\! \{0\}$, its restriction ${\mathcal
  F}_x={\mathcal F}|_{_{T_xM}}$ is a positively homogeneous function of
degree one and the symmetric bilinear form
\begin{displaymath}
  g_{x,y} \colon (u,v)\ \mapsto \ g_{ij}(x, y)u^iv^j=\frac{1}{2}
  \frac{\partial^2 \mathcal F^2_x(y+su+tv)}{\partial s\,\partial t}\Big|_{t=s=0}
\end{displaymath}
is positive definite at every $y\in \hat T_xM$.
\\[1ex]
\emph{Geodesics} of $(M, \mathcal F)$ are determined by a system of $2$nd
order ordinary differential equation $\ddot{x}^i + 2 G^i(x,\dot x)=0$,
$i = 1,\dots,n$ in a usual local coordinate system $(x^i,y^i)$ of $TM$, where
$G^i(x,y)$ are given by
\begin{equation}
  \label{eq:G_i}  G^i(x,y):= \frac{1}{4}g^{il}(x,y)\Big(2\frac{\partial
    g_{jl}}{\partial x^k}(x,y) -\frac{\partial g_{jk}}{\partial
    x^l}(x,y) \Big) y^jy^k.
\end{equation}
A vector field $X(t)=X^i(t)\frac{\partial}{\partial x^i}$ along a curve
$c(t)$ is said to be parallel with respect to the associated
\emph{homogeneous (nonlinear) connection} if it satisfies
\begin{equation}
  \label{eq:D}
  D_{\dot c} X (t):=\Big(\frac{d X^i(t)}{d t}+  G^i_j(c(t),X(t))\dot c^j(t)
  \Big)\frac{\partial}{\partial x^i}
  =0, 
\end{equation}
where $ G^i_j=\frac{\partial G^i}{\partial y^j}$.
\\[1ex]
The \emph{horizontal Berwald covariant derivative} $\nabla_X\xi$ of
$\xi(x,y) = \xi^i(x,y)\frac {\partial}{\partial y^i}$ by the vector field
$X(x) = X^i(x)\frac {\partial}{\partial x^i}$ is expressed locally by
\begin{equation}
  \label{covder}
  \nabla_X\xi = \left(\frac {\partial\xi^i(x,y)}{\partial x^j} 
    - G_j^k(x,y)\frac{\partial \xi^i(x,y)}{\partial y^k} + 
    G^i_{j k}(x,y)\xi^k(x,y)\right)X^j\frac {\partial}{\partial y^i}, 
\end{equation}
where we denote $G^i_{j k}(x,y) := \frac{\partial G_j^i(x,y)}{\partial y^k}$.

\bigskip

\subsubsection{Curvatures of Finsler manifolds}
\

\smallskip

\noindent
The \emph{curvature tensor} field
\begin{math}
  R\!= \! R^i_{jk}dx^j \!\otimes \! dx^k \! \otimes \!
  \frac{\partial}{\partial y^i}
\end{math}
has the expression
\begin{equation}
  \label{eq:R_coeff}
  R^i_{jk} =  \frac{\partial G^i_j(x,y)}{\partial x^k} 
  - \frac{\partial G^i_k(x,y)}{\partial x^j} + 
  G_j^m(x,y)G^i_{k m}(x,y) - G_k^m(x,y)G^i_{j m}(x,y). 
\end{equation} 
The \emph{Riemannian curvature tensor} is $R_y:=R(\cdot, y)$, its components
can be obtained as $R^i_{j}=R^i_{jk}y^k$. The Ricci curvature $Ric(y)$ is
defined to be the trace of $R_y$, $Ric(y) := R_m^m (x, y)$.  For a tangent
plane $P=Span\halmaz{y,u}\subset T_xM$, the flag curvature is defined as
\begin{displaymath}
  \mathbf{K}(P,y)=\frac{g_y(R_y(u),u)}{g_y(y,y)g_y(u,u)-g_y(y,u)^2}.
\end{displaymath}
If a manifold has \emph{constant flag curvature}
$\mathbf{K}=\lambda\in{\mathbb R}$, then the Ricci curvature is constant in
the sense that $Ric(y)=(n-1)\lambda F^2$ and the local expression of the
coefficients of the curvature is
\begin{displaymath}
  R^i_{jk} = \lambda\big(\delta_k^ig_{jm}(x,y)y^m -
  \delta_j^ig_{km}(x,y)y^m\big),
\end{displaymath}
where $\delta^i_j$ is the Kronecker delta symbol.  Assume that the Finsler
manifold $(M,\mathcal F)$ is locally projectively flat. Then for every point
$x\in M$ there exists an \emph{adapted} local coordinate system, that is a
mapping $(x^1,\dots ,x^n)$ on a neighbourhood $U$ of $x$ into the Euclidean
space $\mathbb R^n$\!, such that the straight lines of $\mathbb R^n$
correspond to the geodesics of $(M, \F)$. Then the \emph{geodesic
  coefficients} are of the form
\begin{equation}
  \label{eq:proj_flat_G_i}
  G^i \!=\! \mathcal P y^i, \quad  
  G^i_k \!=\! \frac{\partial\mathcal P}{\partial y^k}y^i \!
  + \! \mathcal P\delta^i_k,\quad G^i_{kl} 
  \!=\! \frac{\partial^2\mathcal P}{\partial y^k\partial y^l}y^i 
  \!+\! \frac{\partial \mathcal P}{\partial y^k}\delta^i_l \!+\! 
  \frac{\partial \mathcal P}{\partial y^l}\delta^i_k
\end{equation}
where $\mathcal P(x,y)$ is a 1-homogeneous function in $y$, called the
\emph{projective factor} of $(M,\F)$. According to Lemma 8.2.1 in
\cite[p.155]{Chern_Shen_2006}, if $(M \! \subset\! \mathbb R^n, \F)$ is a
projectively flat manifold, then its projective factor can be computed
using the formula
\begin{equation}
  \label{eq:P}
  \mathcal P(x,y) = \frac{1}{2\F}\frac{\partial\F}{\partial x^i}y^i.
\end{equation}

\bigskip

\subsubsection{Projectively flat Randers manifolds with constant curvature}
\label{sec:shen_proj_flat_randers} \

\smallskip

\noindent
Projectively flat Randers manifolds with constant flag curvature were
classified by Z. Shen in \cite{Shen_2003}. He proved that any projectively
flat Randers manifold $(M, \F)$ with non-zero constant flag curvature has
negative curvature. These metrics can be normalized by a constant factor so
that the curvature is $\lambda=-1/4$. In this case $(M, \F)$ is isometric to
the Finsler manifold defined by the Finsler function
\begin{equation}
  \label{eq:proj_F} 
  \F_a(x,y) = \frac{\sqrt{|y|^2 - \left(|x|^2|y|^2 - 
        \langle x,y\rangle^2\right)}}{1 - |x|^2}   + \epsilon
  \left(\frac{\langle x,y\rangle}  {1 - |x|^2} 
    + \frac{\langle a,y\rangle}{1 + \langle a,x\rangle}\right) 
\end{equation}
on the unit ball $\mathbb D^n \subset \mathbb R^n$, where $a\in \mathbb R^n$
is any constant vector with $|a|<1$ and $\epsilon=\pm 1$ (\cite[Theorem
1.1]{Shen_2003}). We note that the restriction of any orthogonal
transformation $\phi\in \mathbb O(n, \R^n)$ on $\mathbb D^n$ does not change
the Finsler function \eqref{eq:proj_F}, therefore one can assume that
$a\in \mathbb R^n$ has the form $a=(a_1, 0, \dots, 0)$. We can consider
$(\D^n, \F_a)$ as the \emph{standard model} of projectively flat Randers
manifolds with non-zero constant flag curvature.

We remark that the computation of the coefficients of the associated
connection is relatively easy: according to Lemma 8.2.1 of
\cite[p.155]{Chern_Shen_2006}, the projective factor $\P(x,y)$ can be computed
by the formula \eqref{eq:P} which gives in the case \eqref{eq:proj_F}
\begin{equation}
  \label{eq:proj_P}
  \P(x,y) =  \frac12 \left(\!\!\frac{\epsilon\sqrt{|y|^2 - \left(|x|^2|y|^2 
          - \langle x,y\rangle^2\right)} + \langle x,y\rangle}{1 - |x|^2} 
    - \frac{\langle a,y\rangle}{1 + \langle a,x\rangle}\right).
\end{equation}
The geodesic coefficients and the connection coefficients can be computed from
\eqref{eq:proj_P} by using \eqref{eq:proj_flat_G_i}.

\bigskip

\subsection{Holonomy group and its tangent Lie algebras}
\

\smallskip

\noindent
The \textit{holonomy group} $\Hol_x(M)$ of the Finsler manifold $(M, \F)$ at a
point $x\in M$ is the subgroup of the diffeomorphism group
$\mathsf{Diff}^{\infty} (T_x M )$ generated by parallel translations along
piece-wise differentiable closed curves initiated and ended at the point
$x \in M$. Since the canonical parallel translation is homogeneous and
preserves the Finsler function, the holonomy group can be considered on the
\emph{indicatrix}
\begin{math}
  \mathcal{I}_x:= \halmazvonal{ y \in T_x M}{\mathcal{F} (y) = 1}.
\end{math}
That way $\Hol_x(M)$ is a subgroup of the diffeomorphism group
$\mathsf{Diff} ^{\infty } (\I_x)$.  The topological closure
$\overline{\Hol_x\!(M)}$ of the holonomy group in the Fréchet topology of
$\mathsf{Diff}^{\infty } (\I_x)$ is called the \textit{closed holonomy group}.

\subsubsection{The curvature algebra}
\

\smallskip

\noindent
A vector field $\xi\in \X{TM}$ is called a \emph{curvature vector field} of
$(M, \mathcal{F} )$ if it is in the image of the curvature tensor, that is
$\xi = R(X,Y)$ for some $X,Y \in \X{M}$.  The \textit{curvature algebra}
$\mathfrak{R} (M)$ is the Lie algebra generated by curvature vector
fields. Its restriction
\begin{displaymath}
  \mathfrak{R}_x (M):= \halmazvonal{\xi |_{\I _x} }{ \xi \in
    \mathfrak{R} (M)}
\end{displaymath}
is the curvature algebra at the point $x\in M$.  $\mathfrak{R}_x (M)$ is a Lie
subalgebra of $\mathfrak{X} ^{\infty } (\I_x)$.

\subsubsection{The infinitesimal holonomy algebra}
\label{sec:infin-holon-algebra}
\

\smallskip

\noindent
The \textit{infinitesimal holonomy algebra}, denoted by $\mathfrak{hol}^* (M)$
is the smallest Lie algebra generated by curvature vector fields and by the
horizontal Berwald covariant derivation. More precisely,
$\mathfrak{hol}^* (M)$ is the smallest Lie algebra of vector fields on $TM$
satisfying the following two conditions:
\begin{packed_enumerate}
\item for any curvature vector field $\xi $ we have
  $\xi \in \mathfrak{hol}^*(M)$,
\item if $\xi \in \mathfrak{hol}^*(M)$ and $X \in \mathfrak{X} ^{\infty } (M)$
  then $\nabla _X \xi \in \mathfrak{hol} ^* (M)$.
\end{packed_enumerate}
By considering the  restriction of $\mathfrak{hol} ^* (M)$ on the indicatrix
$\I_x$ we obtain the infinitesimal holonomy algebra at the point $x \in M$: 
\begin{displaymath}
  \ihm x := \halmazvonal{ \xi |_{\I_x }}{ \xi \in \mathfrak{hol} ^* (M)}.
\end{displaymath}

We remark that if the manifold is two-dimensional, then the curvature algebra
$\mathfrak{R}_x(M)$ is at most one-dimensional, but the infinitesimal holonomy
algebra $\ihm x$ can be higher, even infinite dimensional: $\ihm x$ is
generated by the restriction of the curvature vector field and its covariant
derivatives.  Using the notation
$\xi_0=R(\partial_{x_1}, \partial_{x_2})\big|_{\I_{x}}$ and
\begin{math}
  \nabla_{i_1,\dots, i_k}\xi_0:=(\nabla_{\partial_{x_{i_1}}} \dots
  \nabla_{\partial_{x_{i_k}}}\xi)\big|_{\I_{x}}
\end{math}
we get
  \begin{equation}
    \label{eq:ih_x_0}
    \ihm{x}=
    \Bigl\langle
    \xi_0, \nabla_1\xi_0, \nabla_2\xi_0 , \nabla_{11}\xi_0,   \dots
    \Bigr\rangle_{\mathcal{L}ie}
\end{equation}
\textbf{Property} (Theorem 4, \cite{muzs_nagy_2011}): At any point $x \in M$
the infinitesimal holonomy algebra $\ihm{x}$ is tangent to the holonomy group
$\Hol_x (M)$.

\bigskip\bigskip \bigskip

\section{Holonomy of projectively flat Randers two-manifolds of non-zero
  constant curvature}

Our aim is to describe the holonomy structure of projectively flat
non-Riemannian Randers two-manifolds with non-zero constant flag curvature. As
a first step, we investigate the holonomy of the standard model described in
Subsection \ref{sec:shen_proj_flat_randers}.

\bigskip

Let $(\D^2, \F_a)$ be the Finsler two-manifold where $\mathbb D^2$ is the unit
ball in $\mathbb R^2$ and $\F_a$ is the Finsler function given by
\eqref{eq:proj_F} where $a=(a_1, 0)\in \mathbb R^2$ is a nonzero constant
vector with $|a_1|<1$. We have the following
\begin{proposition}
  \label{prop:D_2}
  The holonomy group of $(\D^2, \F_a)$ is maximal and $\overline{\Hol_x\!(M)}$
  is diffeomorphic to $\difS$.
\end{proposition}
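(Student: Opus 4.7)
The plan is to exploit the Property recalled at the end of Section~2: the infinitesimal holonomy algebra $\ihm 0$ is tangent to $\Hol_0(M)$, hence it suffices to show that its Fréchet closure in $\mathfrak{X}^\infty(\I_0)$ equals the whole $\mathfrak{X}^\infty(\I_0)$. Since $\I_0$ is a smooth strictly convex closed curve, it is diffeomorphic to $\mathbb{S}^1$, so $\mathsf{Dif\!f}_+^\infty(\I_0)\cong\difS$; density of $\ihm 0$ will yield $\overline{\Hol_0(M)}\supseteq\difS$, and the reverse inclusion is automatic because parallel translation is orientation preserving on the connected indicatrix. Maximality of $\Hol_0(M)$ follows from this equality.

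The first step I would carry out is an explicit computation of $\ihm 0$. Working at $x=0\in\D^2$ and parameterizing $\I_0\cong\mathbb{S}^1$ by an angular variable $\theta$, I would use \eqref{eq:P} and \eqref{eq:proj_F} to compute $\P(x,y)$ near the origin, extract $G^i,G_j^i,G_{jk}^i$ from \eqref{eq:proj_flat_G_i}, and substitute into \eqref{eq:R_coeff} (together with the constant curvature formula for $R^i_{jk}$) to obtain the curvature vector field $\xi_0=R(\partial_{x^1},\partial_{x^2})\big|_{\I_0}$ as an explicit vector field on $\mathbb{S}^1$. Iterated application of the Berwald covariant derivative \eqref{covder} then yields the family $\{\xi_0,\nabla_1\xi_0,\nabla_2\xi_0,\nabla_{11}\xi_0,\ldots\}$ generating $\ihm 0$ via \eqref{eq:ih_x_0}. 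The essential observation to extract is that the Randers twist term $\epsilon\langle a,y\rangle/(1+\langle a,x\rangle)$, active precisely because $a_1\neq 0$, forces the successive covariant derivatives to produce trigonometric modes in $\theta$ of arbitrarily high order, in contrast to the Riemannian case $a=0$ where only finitely many modes arise.

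With the generators in hand, I would run a density argument on $\mathbb{S}^1$: writing each generator as $f(\theta)\partial_\theta$, I would identify inside $\ihm 0$ both the three-dimensional projective subalgebra $\mathfrak{sl}_2(\mathbb{R})\subset\mathfrak{X}^\infty(\mathbb{S}^1)$ spanned by $\partial_\theta,\cos\theta\,\partial_\theta,\sin\theta\,\partial_\theta$ and at least one further generator carrying a strictly higher Fourier mode. Invoking the maximality principle for closed Lie subalgebras of $\mathfrak{X}^\infty(\mathbb{S}^1)$ properly containing $\mathfrak{sl}_2(\mathbb{R})$, the Fréchet closure of $\ihm 0$ must then coincide with $\mathfrak{X}^\infty(\mathbb{S}^1)$, and exponentiating back to the group level completes the proof. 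The main obstacle is the bookkeeping in the previous step: the iterated Berwald covariant derivatives become increasingly cumbersome, and controlling them precisely enough to exhibit a generator of Fourier order strictly greater than one is the computational heart of the argument; everything else is a standard application of the tangency property together with maximality.
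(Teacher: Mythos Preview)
Your broad outline—compute $\ih 0$ and show its closure is all of $\mathfrak X^\infty(\I_0)$—matches the paper's, but the key intermediate step you propose does not go through as stated. You plan to locate the standard projective subalgebra $\mathrm{span}\{\partial_\theta,\cos\theta\,\partial_\theta,\sin\theta\,\partial_\theta\}$ inside $\ih 0$ and then adjoin one higher Fourier mode. In the angular parameter $t$ of \eqref{eq:diff_s_ind}, however, the curvature vector field is $\xi_0=\omega(t)\,\tfrac{d}{dt}$ with $\omega(t)=-\tfrac14(1+a_1\cos t)^2$, and the first covariant derivatives yield $\cos t\,\xi_0$ and $\sin t\,\xi_0$ (see \eqref{eq:cov_der_1_R}). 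These three vectors do \emph{not} span a copy of $\mathfrak{sl}_2(\mathbb R)$: their brackets already leak into higher modes because of the weight $\omega$. The ``pure'' field $\partial_\theta=\omega^{-1}\xi_0$ is not a trigonometric-polynomial multiple of $\xi_0$, so it never appears in $\ih 0$ itself—only in its closure. Thus the hypothesis $\mathfrak{sl}_2\subset\ih 0$ on which your maximality shortcut rests is simply not satisfied, and producing ``one generator of Fourier order strictly greater than one'' is not the obstacle (indeed $\xi_0$ itself already carries a $\cos 2t$ component).

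What the paper actually does in place of this is the inductive Lemma~\ref{lemma:shen_renders}: starting from $\Sigma_1=\mathrm{span}\{\xi_0,\cos t\,\xi_0,\sin t\,\xi_0\}$ and $\Sigma_2$ (obtained from the second covariant derivatives \eqref{eq:cov_der_2_R}), one climbs from $\Sigma_n$ to $\Sigma_{n+1}$ not by further covariant differentiation but by \emph{Lie brackets} such as $[\xi_0,\xi_0^{0,n-1}]$ and $[\xi_0,\xi_0^{1,n-2}]$, where the top-order term isolates with the nonzero coefficient $\tfrac{a_1^2(n-1)}{4}$; this is exactly where $a\neq 0$ enters. Only after \emph{all} $\Sigma_n$ are secured does one divide by $\omega$ via Fej\'er approximation to obtain the full Fourier algebra in $\overline{\ih 0}$ and invoke Proposition~5.1 of \cite{MuzsNagy_max_2015}. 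So the step you flag as ``bookkeeping'' is not a finite computation of higher covariant derivatives; it is a genuine Lie-algebra generation argument producing every trigonometric mode of $\xi_0$, and it cannot be bypassed by the $\mathfrak{sl}_2$-maximality principle you invoke.
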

\textit{Proof.} We consider the case when $\epsilon=+1$ in the expression
\eqref{eq:proj_F} of $\F_a$. The computation when $\epsilon=-1$ is analogous.
The projective factor $\P$ and the geodesic coefficients $G^i_j$ can be easily
computed by formula \eqref{eq:proj_P} and \eqref{eq:proj_flat_G_i}.  The
expression of the curvature vector field
\begin{math}
  \xi=R \left(\partial_{x_1}, \partial_{x_2}
  \right)
\end{math}
at the point $0\in \R^2$ is
\begin{equation}
  \label{eq:R_example}
  \xi=R
  \left(
    \frac{\partial}{\partial x_1},
    \frac{\partial}{\partial x_2}
  \right)=
  \frac{1}{4} \frac {y_{{2}} \left( a_1y_{{1}}  +\norm{y} \right) }
  {\norm{y}}  \frac{\partial}{\partial y_1}
  -\frac{1}{4}  \left(y_{1}+y_1a_1^2 +2\, a_1\norm{y}  \right)
  \frac{\partial}{\partial y_2}.
\end{equation}
Since the Minkowski norm at $0\in \D^2$ is
\begin{math}
  \mathcal{F}_a(0,y)=\norm{y}+ \langle a,y \rangle,
\end{math}
the indicatrix $\I_0\subset T_0M$ at $0$ is defined by the equation
$\sqrt{y_1^2+y_2^2} +a_1y_1=1$.  Using polar coordinates $(r, t)$ on
$T_0\R^2$, the equation of the indicatrix $\I_0$ is $r(1 + a_1 \cos t)=1$. A
parametrization of $\I_0$ is given by
\begin{equation}
  \label{eq:diff_s_ind}
  \phi(t)=  \bigl(  (y_1(t), y_2(t)  \bigr) =
  \left(
    \frac{\cos t}{1+ a_1 \cos t},  \frac{\sin t}{1 + a_1 \cos t}
  \right),
\end{equation}
in terms of the parameter $t$. Using this parametrisation the coordinate
expression of the restriction $\xi_0:= \xi\big|_{\I_0}$ of the curvature
vector field \eqref{eq:R_example} on the indicatrix $\I_0$ is
\begin{equation}
  \label{eq:xi_0}
  \xi_0:=  \omega(t) \frac{d}{dt} 
\end{equation}
where
\begin{equation}
  \label{eq:omega}
  \omega(t) :=-\tfrac{1}{4} (1+a_1\cos t)^2.
\end{equation}
Let us introduce the notation
\begin{equation}
  \label{eq:lemma_ih_0}
  \Sigma_n:= \mathrm{Span}_\R\halmazvonal{\xi^{l,m}_0
  }{l+ m \leq n},
\end{equation}
where
\begin{equation}
  \xi^{l,m}_0 :=(\sin^l  t \cos^m t) \cdot \xi_0 \in \X{\I_0}
\end{equation}
are vector fields on the indicatrix $\I_0$ defined as functional multiples of
\eqref{eq:xi_0}.  We have the following


\begin{lemma}
  \label{lemma:shen_renders}
  For any $n \in \N$ we have $\Sigma_n \subset \ih 0$.
\end{lemma}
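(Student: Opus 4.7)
My plan is to argue by induction on $n$.

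\textbf{Base case.} For $n=0$, the inclusion $\Sigma_0 = \mathrm{Span}_\R(\xi_0) \subset \ih 0$ is immediate, since $\xi_0$ is the restriction to $\I_0$ of a curvature vector field and the infinitesimal holonomy algebra contains all curvature vector fields by construction.

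\textbf{Initialization.} I would next compute the first two horizontal Berwald covariant derivatives $\nabla_1 \xi_0 := (\nabla_{\partial_{x_1}} \xi)|_{\I_0}$ and $\nabla_2 \xi_0 := (\nabla_{\partial_{x_2}} \xi)|_{\I_0}$ at the point $x=0$. This uses formula \eqref{covder} together with the projectively flat expressions \eqref{eq:proj_flat_G_i} for $G^i_j$ and $G^i_{jk}$, where the projective factor $\mathcal P$ is given by \eqref{eq:proj_P}. Re-expressing the resulting vector fields on $\I_0$ in the parameter $t$ via \eqref{eq:diff_s_ind}, both become explicit trigonometric polynomial multiples of $\partial_t$. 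A short linear-algebra argument starting from $\xi_0, \nabla_1 \xi_0, \nabla_2 \xi_0$ should then isolate $\xi_0^{1,0} = \sin t \cdot \xi_0$ and $\xi_0^{0,1} = \cos t \cdot \xi_0$, giving $\Sigma_1 \subset \ih 0$.

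\textbf{Inductive step.} The key algebraic tool is the elementary Lie-bracket identity
\begin{equation*}
  [h \xi_0,\, k \xi_0]\;=\;\omega(t)\bigl(h(t)k'(t) - k(t)h'(t)\bigr)\cdot \xi_0,
\end{equation*}
valid for all smooth functions $h, k$ of $t$, and derived immediately from $\xi_0 = \omega(t)\partial_t$. If $h$ and $k$ are trigonometric polynomials of degrees at most $p$ and $q$, then since $\omega$ is a polynomial of degree $2$ in $\cos t$, the bracket is a trigonometric polynomial multiple of $\xi_0$ of degree at most $p + q + 2$, hence lies in $\Sigma_{p+q+2}$. Iterated brackets of $\xi_0, \xi_0^{1,0}, \xi_0^{0,1}$ with elements of the already-constructed $\Sigma_n$, supplemented where necessary by the higher covariant derivatives $\nabla_{i_1\cdots i_s}\xi_0$ to fill in remaining degree gaps, should produce enough new vector fields to span all of $\Sigma_{n+1}$.

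\textbf{Main obstacle.} Using the trigonometric identity $\sin^2 t = 1 - \cos^2 t$ to normalise, the ``top layer'' $\Sigma_{n+1}/\Sigma_n$ is a two-dimensional vector space, represented by $\cos^{n+1} t \cdot \xi_0$ and $\sin t \cos^{n} t \cdot \xi_0$. The principal technical difficulty is to exhibit, at each step, two explicit elements of $\ih 0 \cap \Sigma_{n+1}$ whose images in $\Sigma_{n+1}/\Sigma_n$ are $\R$-linearly independent. The non-Riemannian assumption $a_1 \neq 0$, entering through $\omega = -\tfrac14 (1 + a_1 \cos t)^2$, is essential here: it guarantees that the leading coefficients produced by the iterated brackets do not degenerate, so that the induction actually advances by one degree at each stage rather than collapsing into a proper subspace of the top layer.
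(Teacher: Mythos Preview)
Your outline is correct and follows essentially the paper's proof: induction on $n$, first covariant derivatives to obtain $\Sigma_1$, then Lie brackets (together with $a_1\neq 0$) to climb one degree at a time. One detail worth sharpening: the paper also computes the \emph{second} covariant derivatives $(\nabla_i\nabla_j\xi)|_{\I_0}$ explicitly to secure $\Sigma_2$ before launching the bracket induction, because the two brackets it actually uses in the inductive step are $[\xi_0,\xi_0^{0,n-1}]$ and $[\xi_0,\xi_0^{1,n-2}]$, which land in $\Sigma_{n+1}$ with top-layer coefficients $\pm\tfrac{a_1^2(n-1)}{4}$ --- nonzero only for $n\geq 2$. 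Your hedge ``supplemented where necessary by the higher covariant derivatives'' covers this, but it is worth knowing that this supplementation is genuinely needed at the $\Sigma_1\to\Sigma_2$ step and that pure Lie brackets suffice thereafter.
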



\begin{remark}
  \label{remark:pyt}
  Using the the Pythagorean trigonometric identity $\sin^2 t + \cos^2 t = 1$,
  every elements of $\Sigma_{n}$ can be expressed as a linear combination of
  the elements $\xi_0^{0,m}\! = \! \cos^m t \xi_0$ and
  $\xi_0^{1,m-1}\!= \!\sin t \cos^{m-1} t \, \xi_0$, $0 \leq m \leq n$, that
  is
  \begin{equation}
    \label{eq:ind_pyt}
    \Sigma_n= \mathrm{Span}_\R \halmazvonal{\xi_0^{0,m},   \
      \xi_0^{1,m-1}}{ 0 \leq m \leq n\,} .
  \end{equation}
\end{remark}

\smallskip

\begin{proof}[Proof of the Lemma.]
  Taking into account Remark \ref{remark:pyt} we prove the lemma by
  mathematical induction by showing that the generating elements
  \eqref{eq:ind_pyt} of $\Sigma_n$ are elements of $\ih 0$.

  $\bullet$ \emph{First step.} From the definition of the infinitesimal
  holonomy algebra (see subsection \ref{sec:infin-holon-algebra}) we know that
  $\xi^{0,0}_0\!=\!\xi_0$ given by \eqref{eq:xi_0} is an element of $\ih 0$.
  Moreover, as \eqref{eq:ih_x_0} shows, the restriction of successive
  covariant derivatives of the curvature vector field \eqref{eq:R_example} on
  $\I_0$ are also elements of $\ih 0$. They can be expressed in terms of
  multiples of \eqref{eq:xi_0}. Computing the first covariant derivatives we
  find that
  \begin{subequations}
    \label{eq:cov_der_1_R} 
    \begin{alignat}{1}
      \label{eq:cov_der_R_1}
    (\nabla _1 \xi)\big|_{\I_0} &= -\tfrac{3}{2} \left( a_1 - \cos t \right) \xi_0,
    \\
    \label{eq:cov_der_R_2}
    (\nabla _2 \xi)\big|_{\I_0} &= \hphantom{-}\tfrac{3}{2} \sin t \, \xi_0,
  \end{alignat}
\end{subequations} 
Using a linear combination of \eqref{eq:cov_der_R_1} and
\eqref{eq:cov_der_R_2} we get that $\xi^{1,0}_0=\sin t \, \xi_0$ and
$\xi^{0,1}_0=\cos t \, \xi_0$ are element of $\ih 0$.  Therefore we have
\begin{equation}
  \label{eq:lemma_ih_0_1}
  \Sigma_1=\halmaz{  \xi_0, \ \sin  t \, \xi_0, \  \cos t  \, \xi_0 } \subset \ih{x_0},
\end{equation}
that is the statement of the Lemma is correct for $n=1$.

$\bullet$ \emph{Second step.}  By definition, the second covariant derivatives
of the curvature vector field are also elements of the infinitesimal holonomy
algebra. Computing them we can find that
\begin{subequations}
  \label{eq:cov_der_2_R} 
  \begin{alignat}{1}
    \label{eq:cov_der_R_11}
    (\nabla _1 \nabla_1 \xi) \big|_{\I_0}&= \hphantom{-}\tfrac{3}{4}\left(
      5a_1^2 - a_1\cos ^3 t -5a_1 \cos t +3 \cos ^2 t+1\right) \xi_0 ,
    \\
    \label{eq:cov_der_R_12}
    (\nabla_1 \nabla_2 \xi) \big|_{\I_0}&= -\tfrac{3}{4}\left( a_1\cos ^2 t
      -3a_1 -4 \cos t \right) \sin t \, \xi_0,
    \\
    \label{eq:cov_der_R_22}
    (\nabla_2 \nabla_2 \xi) \big|_{\I_0}&= \hphantom{-} \tfrac{3}{4}\left(
      a_1\cos ^3 t +5 -4 \cos ^2 t \right) \xi_0.
  \end{alignat}
\end{subequations} 
Using linear combinations of the elements \eqref{eq:lemma_ih_0_1} of
$\Sigma_1$ and \eqref{eq:cov_der_R_11}--\eqref{eq:cov_der_R_22} we get that
\begin{math}
  \label{eq:lemma_ih_0_2} 
  \halmaz{\cos^2  t \, \xi_0, \  \sin t \cos t  \, \xi_0} \subset \ih{x_0}.
\end{math}
Completing this set with the elements of \eqref{eq:lemma_ih_0_1} we get that
\begin{displaymath}
  \Sigma_2\subset \ih 0.
\end{displaymath}

$\bullet$ \emph{Third step.} Let us suppose that the statement of the lemma is
true for some $n \in \N$, that is $\Sigma_n \subset \ih 0$.  We will show that
$\Sigma_{n+1} \subset \ih 0$ too. According to the Remark, $\Sigma_{n+1}$ is
generated by the elements
\begin{equation}
  \label{eq:sigma_n+1}
  \halmazvonal{\xi_0^{0,m},   \
    \xi_0^{1,m-1}}{ 0 \leq m \leq n\,} \cup \halmaz{\xi^{0,n+1}_0, \ \xi^{1,n}_0}
\end{equation}
One can observe that the vector fields of the first set are elements of
$\Sigma_n$, and by the inductive hypotheses, they are elements of $\ih 0$.
Hence, to prove the lemma we have to show that $\xi^{0,n+1}_0$ and
$\xi^{1,n}_0$ are elements of $\ih 0$.  We have
\begin{alignat*}{1}
  \bigl[\xi_0, \xi^{0,n-1}_0 \bigr]
  & = \bigl[\xi_0 , \cos^{n-1} t \,\xi_0 \bigr]
  = \bigl( \mathcal{L}_{\xi_0}(\cos^{n-1} t) \bigr) \,\xi_0
  \\
  & = -(n-1)\omega(t)(\sin t \cos^{n-2} t) \,\xi_0
  \stackrel{\eqref{eq:omega}}{=}
  \\
  & = \tfrac{n-1}{4} (1+2a_1\cos t+a_1^2 \cos^2 t)(\sin t \cos^{n-2} t)
  \,\xi_0
  \\
  & = \tfrac{n-1}{4} \, \xi_0^{1,n-2}+\tfrac{a_1(n-1)}{2} \,
  \xi_0^{1,n-1}+\tfrac{a^2_1(n-1)}{4} \, \xi_0^{1,n}.
\end{alignat*}
By the inductive hypothesis, the Lie bracket on the left hand side and the
first two terms in the last line are elements of $\ih 0$. Consequently the
last one must be also an element of $\ih 0$. Moreover, the coefficients of
$\xi_0^{1,n}$ is a nonzero constant, therefore we get that
$\xi_0^{1,n} \in \ih 0$. 

Similarly, computing the Lie bracket of the elements $\xi_0$ and
$\xi_0^{1,n-2}$ of the Lie algebra $\ih 0$ we get
\begin{alignat*}{1}
  \bigl[\xi_0,& \xi^{1,n-2}_0 \bigr]
   = \bigl[\xi_0 , \sin t \cos^{n-2} t \,\xi_0 \bigr]
   = \mathcal{L}_{\xi_0}(\sin t \cos^{n-2} t) \,\xi_0
   \\
   & = \omega(t)(\cos t \cos^{n-2} t - (n-2)\sin^2t \cos^{n-3} t) \,\xi_0
   \stackrel{\eqref{eq:omega}}{=}
   \\
   & = \tfrac{n-3}{4}\xi^{0,n-3}_0 +\tfrac{a_1(n-2)}{2}\xi^{0,n-2}_0
   +\tfrac{a_1^2(n-2)-(n-1)}{4}\xi^{0,n-1}_0 -\tfrac{a_1(n-1)}{2}\xi^{0,n-1}_0
   -\tfrac{a_1^2(n-1)}{4}\xi^{0,n+1}_0
\end{alignat*}
From the inductive hypothesis we know that the Lie bracket on the left hand
side and the first four terms in the last line on the right hand side are
elements of $\ih 0$, therefore the last one must be also an element of
$\ih 0$.  Since the coefficient of $\xi_0^{0,n+1}$ is nonzero we get that
$\xi_0^{0,n+1} \in \ih 0$.  Consequently, the vector fields
\eqref{eq:sigma_n+1} generating $\Sigma_{n+1}$ are all elements of $\ih 0$ and
$\Sigma_{n+1}\subset \ih 0$.
\end{proof}


\begin{proof}[Proof of Proposition \ref{prop:D_2}.]
  From the multiple-angle formulas of the sine and cosine functions
  \begin{alignat*}{2}
    \sin nt &= \sum_{k=0}^{n} \tbinom{n}{k} \cos ^k t \sin^{n-k} t \sin
    \left(\tfrac{n-k}{2} \pi \right),
    \qquad
    \cos nt &= \sum_{k=0}^{n} \tbinom{n}{k} \cos^k t \sin^{n-k} t \cos
    \left(\tfrac{n-k}{2} \pi \right),
  \end{alignat*}
  we get that the vector fields
  $\sin nt \, \xi_0, \cos nt \, \xi_0 \in \X{\I_0}$ can be expressed as a
  linear combination of elements of $\Sigma_n$:
  \begin{subequations}
    \label{eq:sin_cos_nt}
    \begin{alignat}{1}
      \label{eq:eq:sin_nt}
      \sin nt\, \xi_0 &= \sum_{k=0}^{n} \tbinom{n}{k} \sin \left(\tfrac{n-k}{2}
        \pi \right) \xi_0^{k,n-k},
      \\
      \label{eq:eq:cos_nt}
      \cos nt \, \xi_0 &= \sum_{k=0}^{n} \tbinom{n}{k} \cos
      \left(\tfrac{n-k}{2} \pi \right)\, \xi_0^{k,n-k}.
    \end{alignat}
  \end{subequations}
  On the other side, Lemma \ref{lemma:shen_renders} shows that the vector
  fields of $\Sigma_n$ are elements of the holonomy algebra $\ih 0$.
  Therefore, the vector fields \eqref{eq:eq:sin_nt} and \eqref{eq:eq:cos_nt},
  $n \in \N$, are element of $\ih 0$ and
  \begin{equation}
    \mathrm{Span}\halmazvonal{ \sin nt\, \xi_0, \ \cos nt\, \xi_0}{
      n=0,1,\dots} \subset \ih 0.
  \end{equation}
  Moreover, any $2\pi$ periodic smooth function can be approximated uniformly
  by the arithmetical means of the partial sums of its Fourier series
  \cite[Theorem 2.12]{Katznelson_1976}. In particular the functions
  $(\sin nt)/\omega(t)$ and $(\cos nt)/\omega(t)$ can be approximated
  uniformly by their Fourier sums. Hence we get
  \begin{equation}
    \label{eq:fourier_alg_I}
    \big\{
    \tfrac{d}{dt},\, \cos nt\tfrac{d}{dt},\, \sin nt\tfrac{d}{dt}
    \big\}_{n\in \mathbb N} \subset  \overline{
      \left\{
        \mathrm{Span}    \big\{\xi_0,\, \cos nt\, \xi_0,\, 
        \sin nt\, \xi_0\big\}_{n\in
          \mathbb N}
      \right\}} \subset  \ih 0.
  \end{equation}
  The Lie algebra generated by the left hand side of \eqref{eq:fourier_alg_I}
  is diffeomorphic to the \emph{Fourier algebra} $\mathsf{F}(\mathbb S^1)$ on
  $\mathbb S^1$. Therefore from \eqref{eq:fourier_alg_I} we get that the
  infinitesimal holonomy algebra contains a Lie algebra diffeomorphic to the
  \emph{Fourier algebra} $\mathsf{F}(\mathbb S^1)$ on $\mathbb S^1$. Hence
  from Proposition 5.1 of \cite{MuzsNagy_max_2015} we get that the holonomy
  group $\mathsf{Hol}_0(\D^2)$ is maximal and 
  \begin{math}
    \overline{\mathsf{Hol}_0(\D^2)}
  \end{math}
  is diffeomorphic to $\mathsf{Diff}_+^{\infty}(\mathbb S^1)$.
\end{proof}

\bigskip

Using Z.~Shen's classification theorem of Randers manifolds we can get the
following


\begin{theorem}
  \label{thm:holon-proj-flat}
  The holonomy group of a simply connected non-Riemannian projectively flat
  Finsler two-manifold of constant non-zero flag curvature is maximal and
  $\overline{\Hol (M)}$ is diffeomorphic to the orientation preserving
  diffeomorphism group of $\mathbb S^1$, that is 
  \begin{displaymath}
    \overline{\Hol (M)}  \cong   \difS.
  \end{displaymath}
\end{theorem}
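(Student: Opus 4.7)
The plan is to deduce the theorem from Proposition~\ref{prop:D_2} by invoking Shen's classification to reduce an arbitrary $(M,\F)$ satisfying the hypotheses to the standard model $(\D^2, \F_a)$ of Subsection~\ref{sec:shen_proj_flat_randers}, and then transporting the holonomy statement across the resulting isometry. The three steps are: (i) rescale $\F$ to normalize the flag curvature to $\lambda=-1/4$; (ii) invoke Shen's classification to produce a global isometry with a standard model; (iii) transfer the holonomy statement across this isometry.

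For step~(i), a direct inspection of \eqref{eq:G_i} shows that replacing $\F$ by $c\F$ for a positive constant $c$ leaves the geodesic coefficients $G^i$ (and hence the nonlinear connection coefficients $G^i_j$ and parallel translation) unchanged, while scaling the flag curvature by $c^{-2}$ and the indicatrix by a radial dilation. Thus the holonomy group is preserved up to conjugation by a trivial dilation, and I may assume $\lambda=-1/4$. For step~(ii), since $M$ is non-Riemannian and projectively flat with constant flag curvature $-1/4$, the Shen classification recalled in Subsection~\ref{sec:shen_proj_flat_randers} identifies $(M,\F)$ locally with a member of the family \eqref{eq:proj_F}; the non-Riemannian hypothesis forces the constant vector $a$ to be nonzero, and the simple connectedness of $M$ allows the adapted projective charts supplied by Shen's theorem to be pieced together by a standard monodromy argument into a single global isometry $\Phi\colon (M,\F)\to (\D^2,\F_a)$ with $a=(a_1,0)$, $0<|a_1|<1$.

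For step~(iii), any isometry of Finsler manifolds intertwines the canonical homogeneous connections: the tangent map of $\Phi$ sends indicatrix to indicatrix diffeomorphically and conjugates parallel transport along corresponding loops. Consequently conjugation by $\Phi_{*,x}$ yields a topological group isomorphism $\Hol_x(M) \to \Hol_{\Phi(x)}(\D^2, \F_a)$ in the Fr\'echet topology of $\mathsf{Diff}^{\infty}$ on the respective indicatrices, an isomorphism that extends continuously to the closures. Combining this with Proposition~\ref{prop:D_2} yields both the maximality of $\Hol(M)$ and $\overline{\Hol(M)} \cong \difS$.

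The substantive computational work is already carried out inside Proposition~\ref{prop:D_2}; here no further hard analysis is needed. The only genuinely delicate point is the invocation of Shen's normal form in the \emph{Finsler} formulation stated in the theorem: one must either know a priori that a non-Riemannian locally projectively flat Finsler surface of constant non-zero flag curvature is necessarily Randers, or cite a version of Shen's result that already covers this class. Once that reduction is in hand, together with the use of simple connectedness to globalize the isometry, the proof is essentially a transport-of-structure argument.
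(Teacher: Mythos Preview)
Your argument follows the paper's own proof almost exactly: normalize the curvature to $-1/4$, invoke Shen's classification to identify the metric with a standard $\F_a$, and then read off the conclusion from Proposition~\ref{prop:D_2}. The paper carries this out in a single short paragraph and is, if anything, terser than you are about why rescaling and isometry preserve the holonomy data.

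One small difference worth noting: the paper is content with the statement that $\F$ ``can be locally expressed in the form $\F_a$'' and transfers the conclusion directly, whereas you introduce a monodromy argument to manufacture a \emph{global} isometry $\Phi\colon M\to\D^2$. That extra step is not really needed: the maximality conclusion in Proposition~\ref{prop:D_2} is obtained by showing that the \emph{infinitesimal holonomy algebra} $\ih 0$ contains a copy of the Fourier algebra and then appealing to Proposition~5.1 of \cite{MuzsNagy_max_2015}. Since $\ihm{x_0}$ is built from the curvature vector field and its covariant derivatives at $x_0$, it is a local invariant, so a local isometry with $(\D^2,\F_a)$ already forces $\ihm{x_0}$ to contain the Fourier algebra and hence $\overline{\Hol_{x_0}(M)}\cong\difS$. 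Your globalization is harmless but unnecessary, and the claim that simple connectedness yields a global isometry onto all of $\D^2$ would itself require justification.

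Your closing caveat about the ``Finsler'' versus ``Randers'' hypothesis is well observed: Shen's classification \cite{Shen_2003}, as recalled in Subsection~\ref{sec:shen_proj_flat_randers}, is stated for Randers metrics, and the theorem's wording (``Finsler'') is broader than what that citation literally covers. The paper does not address this discrepancy in its proof; in context (title, section heading, Corollary~\ref{cor:holonomy}) the intended hypothesis is evidently Randers.
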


\begin{proof}
  Let $(M, \F)$ be a simply connected non-Riemannian projectively flat Finsler
  two-manifold of constant non-zero flag curvature and $x_0\in M$.  Since
  rescaling the metric by a constant factor does not change the connection and
  the parallel translation, it does not change the holonomy group
  either. Hence we can suppose that the metric is normalized so that the
  curvature is $\lambda=-\tfrac{1}{4}$.  Using Shen's results, $\F$ can be
  locally expressed in the form $\F_a$ given in \eqref{eq:proj_F} where
  $a=(a_1, 0)\in \mathbb R^2$ is a nonzero constant vector with $|a_1|<1$.
  From Proposition \ref{prop:D_2} we get, that the closed holonomy group of
  $(\D^2, \F_a)$ is maximal and diffeomorphic to
  $\mathsf{Diff}_+^{\infty}(\mathbb S^1)$, therefore the same is true for the
  closed holonomy group $\overline{\Hol_x\! (M)}$ of $(M, \F)$ at $x_0\in M$.
\end{proof}

\medskip

We can obtain the following classification:

\begin{corollary}
  \label{cor:holonomy}
  The closure of the holonomy group $\Hol (M)$ of a simply connected, locally
  projectively flat Randers two-manifold of constant flag curvature $\lambda$
  is
  \begin{packed_enumerate}
  \item the trivial group $\{id\}$, when $\lambda=0$;
  \item the rotation group $SO(2)$, when $\lambda\neq 0$ and the metric is
    Riemannian;
  \item the orientation preserving diffeomorphism group of the circle
    $\mathsf{Diff}_+^{\infty}(\mathbb S^1)$, when $\lambda\neq 0$ and the
    metric is non-Riemannian.
  \end{packed_enumerate}
\end{corollary}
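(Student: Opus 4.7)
The plan is to split the corollary according to its three mutually exclusive cases and dispose of each separately; the third and main case is handled directly by Theorem \ref{thm:holon-proj-flat}, while the other two follow essentially from classical facts.

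First, consider $\lambda = 0$. The constant-curvature formula for $R^i_{jk}$ quoted in Section 2 gives $R \equiv 0$, so every curvature vector field vanishes, and then by the local expression \eqref{covder} every horizontal Berwald covariant derivative of a curvature vector field vanishes as well. Hence the infinitesimal holonomy algebra $\ihm{x}$ is trivial at every point. Since $M$ is simply connected, I would then invoke the standard argument that if the curvature of a homogeneous (nonlinear) connection vanishes identically, then parallel transport along any piecewise-smooth loop is the identity. This yields $\overline{\Hol(M)} = \{id\}$.

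Next, consider $\lambda \neq 0$ with $\F$ Riemannian. Here $(M, \F)$ is a simply connected Riemannian two-manifold of non-zero constant sectional curvature, i.e., a space form. By the Borel--Lichnerowicz theorem recalled in the introduction, $\Hol(M)$ is a closed Lie subgroup of $SO(2)$. Because the curvature is non-zero, the infinitesimal holonomy is already positive-dimensional, so the identity component of $\Hol(M)$ has positive dimension and must coincide with $SO(2)$; simple connectivity of $M$ then forces $\overline{\Hol(M)} = SO(2)$. Equivalently, one can identify $M$ with $S^2$ or the hyperbolic plane $H^2$ and cite the classical computation of their holonomy. Finally, the remaining case, $\lambda \neq 0$ with $\F$ non-Riemannian, is exactly the content of Theorem \ref{thm:holon-proj-flat} and gives $\overline{\Hol(M)} \cong \difS$.

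The main obstacle I anticipate is the $\lambda = 0$ case: although triviality of $\ihm{x}$ is immediate from $R \equiv 0$, passing from this infinitesimal triviality to triviality of the global holonomy group on a simply connected base requires the homogeneous-connection analogue of the usual path-independence argument, and is the only step that is not pure bookkeeping or a direct citation. The other two cases reduce to invoking results already available in the literature or proved earlier in this paper.
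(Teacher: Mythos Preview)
Your proposal is correct and follows the same three-case split as the paper, invoking Theorem~\ref{thm:holon-proj-flat} for case~(3). For cases~(1) and~(2) the paper simply cites \cite{MuzsNagy_2015} and declares the resulting groups ``already well known,'' whereas you spell out the direct arguments (flatness when $\lambda=0$, Borel--Lichnerowicz plus the space-form holonomy when $\lambda\neq 0$ Riemannian); either treatment is fine, and the $R\equiv 0\Rightarrow$ trivial holonomy step you flagged is indeed the standard path-independence argument for flat Ehresmann/homogeneous connections on a simply connected base.
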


\begin{proof}
  The holonomy structure of projectively flat Finsler manifolds was
  investigated in \cite{MuzsNagy_2015}: It has been proved that the holonomy
  group of projectively flat Finsler manifold is \emph{a)} finite dimensional
  if $\lambda=0$ or the metric is Riemannian, and \emph{b)} infinite
  dimensional if $\lambda\neq 0$ and the metric is non-Riemannian.  It is
  clear that the holonomy structures listed in \emph{(1)} and \emph{(2)}
  correspond to the (already well known) finite dimensional holonomy cases.
  Moreover, when $\lambda\neq 0$ and the metric is non-Riemannian we get
  \emph{(3)} from Theorem \ref{thm:holon-proj-flat}.
\end{proof}

\bigskip

\bigskip \bigskip\bigskip \bigskip

\end{document}